\newtheorem{theorem}{Theorem}[section]
\newtheorem{lemma}[theorem]{Lemma}
\newtheorem{corollary}[theorem]{Corollary}
\begin{document}
\title{\bf The facets of the spanning trees polytope}
\date{}
\maketitle
\begin{center}
\author{
{\bf Brahim Chaourar} \\
{ Department of Mathematics and Statistics,\\Imam Mohammad Ibn Saud Islamic University (IMSIU) \\P.O. Box
90950, Riyadh 11623,  Saudi Arabia }}
\end{center}


\begin{abstract}
\noindent Let $G=(V, E)$ be an undirected graph. The spanning trees polytope $P(G)$ is the convex hull of the characteristic vectors of all spanning trees of $G$. In this paper, we describe all facets of $P(G)$ as a consequence of the facets of the bases polytope $P(M)$ of a matroid $M$, i.e., the convex hull of the characteristic vectors of all bases of $M$.
\end{abstract}

\noindent {\bf2010 Mathematics Subject Classification:} Primary 90C57, Secondary 90C27, 52B40.
\newline {\bf Key words and phrases:} spanning trees; polytope; facets; matroid; bases polytope; locked subgraphs.


\section{Introduction}

Sets and their characteristic vectors will not be distinguished. We refer to \cite{BondyMurty:2008}, \cite{Oxley:1992} and \cite{Schrijver:2004}, respectively, about graphs, matroids, and polyhedra terminology and facts. Readers who are familiar with matroid theory can skip the next two paragraphs.
\newline Given a finite set $E$, a matroid $M$ defined on $E$, is the pair $(E, \mathcal {B}(M))$ where $\mathcal {B}(M)$ is a nonempty class of subsets of $E$ satisfying the following basis exchange axiom: for any pair $(B_1, B_2)\in (\mathcal {B}(M))^2$ and for any $e\in B_1\backslash B_2$, there exists $f\in B_2\backslash B_1$, such that $B_1\cup \{ f\}\backslash \{ e\}\in \mathcal {B}(M)$. In this case, $E$ is the ground set of $M$ and $\mathcal {B}(M)$ is the class of bases of $M$. It follows that all bases of $M$ have the same cardinality. We can define the dual $M^*$ of $M$ as the matroid $(E, \mathcal {B}(M^*))$ where $\mathcal {B}(M^*)=\{ E\backslash B$ such that $B\in \mathcal {B}(M)\}$. The rank function of $M$, denoted by $r$, is a nonnegative integer function defined on $2^E$, the class of subsets of $E$, such that $r(X)=Max\{ |X\cap B|$ for all $B\in \mathcal {B}(M)\}$ for any $X\subseteq E$. The rank of $M$, denoted by $r(M)$, is $r(E)$, and it is equal to the cardinality of any basis. The rank function of $M^*$, denoted by $r^*$, is called the dual rank function of $M$. It is not difficult to see that $r^*(X)=|X|-r(E)+r(E\backslash X)$ for any $X\subseteq E$. Finally, $P(M)$ is the convex hull of all bases of $M$, and it is called the bases polytope of $M$.
\\ Given $e\in E$, we can define two operations. The first one is called the deletion of $e$, denoted by $M\backslash e$, and defined by $M\backslash e=(E\backslash \{ e\}, \mathcal {B}(M\backslash e))$, where $\mathcal {B}(M\backslash e)=\{ B\in \mathcal {B}(M)$ such that $e\notin B\}$. The second one is called the contraction of $e$, denoted by $M/e$, and defined by $M/e=(M^*\backslash e)^*$, i.e., the matroid dual of the deletion of $e$ in the dual, which means that $\mathcal {B}(M/e)=\{ B\backslash \{ e\}$ such that $B\in \mathcal {B}(M)$ and $e\in B\}$. For any subset $X\subseteq E$, we denote by $M\backslash X$ (respectively, $M/X$), the matroid obtained by doing successive deletions (respectively, contractions) of all elements of $X$. A matroid $N$ is called a minor of $M$ if it is obtained by successive operations of deletion and/or contraction. In this case, we denote by $E(N)$, the ground set of $N$. A subset $X\subseteq E$ is called closed if $r(X\cup \{ e\})=r(X)+1$ for any $e\in E\backslash X$. Moreover, $M|X$ is the matroid defined by $(X, \mathcal {B}(X))$, where $\mathcal {B}(X)=\{ B\cap X$ such that $B\in \mathcal {B}(M)$ and $|B\cap X|=r(X)\}$. In other words, $M|X=M\backslash (E\backslash X)$, and $M^*|(E\backslash X)=(M/X)^*$. A matroid $M$ is disconnected if there exists a proper nonempty subset $X\subset E$ such that $r(E)=r(X)+r(E\backslash X)$. We say that $M$ is 2-connected if it is not disconnected. It is not difficult to see that $M$ is 2-connected if and only if $M^*$ is too. A minor $N$ of $M$ is a 2-connected component of $M$ if $N$ is 2-connected and $E(N)$ is maximal (by inclusion) for this property. The direct sum of two matroids $M_1$ and $M_2$, denoted by $M_1\oplus M_2$, is defined by the matroid $(E(M_1)\cup E(M_2), \mathcal{B}(M_1)\times \mathcal{B}(M_2))$. It is not difficult to see that any disconnected matroid is the direct sum of its 2-connected components.
\\Suppose that $M$ (and $M^*$) is 2-connected. A subset $L\subset E$ is called a locked subset of $M$ if $M|L$ and $M^*|(E\backslash L)$ are 2-connected, and their corresponding ranks are at least 2, i.e., $min\{r(L), r^*(E\backslash L)\} \geq 2$. In other words, $L$ is locked in $M$ if and only if $M|L$ and $M/L$ are 2-connected, and $r(L)\geq max\{2, 2+r(E)-|E\backslash L|\}$. It is not difficult to see that if $L$ is locked then both $L$ and $E\backslash L$ are closed, respectively, in $M$ and $M^*$ (That is why we call it locked). For a disconnected matroid $M$, locked subsets are unions of locked subsets in the 2-connected components of $M$. Locked subsets were introduced in \cite{Chaourar:2002, Chaourar:2008, Chaourar:2011, Chaourar:2018} to solve many combinatorial problems in matroids.
\\ A parallel closure $P\subseteq E$ of a matroid $M$ defined on $E$, is a maximal subset such that $r(P)=1$ (and any $e\in P$ satisfy $r(e)=1$). A coparallel closure $S\subseteq E$ of $M$ is a parallel closure of the dual $M^*$. In the case of a 2-connected matroid $M$, a parallel closure $P$ (respectively, coparallel closure $S$) is essential if $M/P$ (respectively, if $M\backslash S$) is 2-connected.
\\ For any undirected graph $G$, and any subgraph $H$ of $G$, we denote by $V(H)$ (respectively, $E(H)$), the set of vertices (respectively, edges) of $H$. For any subset of vertices $U\subseteq V(G)$, we denote by $G(U)$ (respectively, $E(U)$) the induced subgraph of $G$ based on the vertices of $U$ (respectively, $E(G(U))$). For any subset of edges $F\subseteq E(G)$, we denote by $V(F)$ the set of vertices incident to any edge of $F$. We also use the notations: $n=|V(G)|$, $m=|E(G)|$, and $n_H=|V(H)|$, $m_H=|E(H)|$ for any subgraph or any subset of edges $H$ of $G$. For any subset $F\subseteq E$, and any $x\in \mathbb{R}^E$, $x(F)=\sum\limits_{e\in F} x(e)$.
\\ For an induced subgraph $H$ of $G$, $\overline{H}=(V(E(G)\backslash E(H)), E(G)\backslash E(H))$ is called the complementary subgraph of $H$ in $G$, i.e., the subgraph obtained by removing the edges of $H$ and vertices which are not incident to any edge of $E(G)\backslash E(H)$. Moreover, for any $F\subseteq E(G)$, $H\cup F$ is the subgraph $(V(H)\cup V(F), E(H)\cup F)$.
\\ Matroids generalize graphs. Given an undirected connected graph $G$, we can define the corresponding (graphical) matroid $M(G)$ as the pair $(E(G), \mathcal T(G))$, where $\mathcal T(G)$ is the class of spanning trees of $G$. By analogy, $P(G)=P(M(G))$ is the spanning trees polytope, i.e., the convex hull of all spanning trees of $G$, where $M(G)$ is the corresponding (graphical) matroid of $G$. A locked subgraph $H$ of $G$ is a subgraph for which $E(H)$ is a locked subset of $M(G)$. Deletions and contractions in the corresponding (graphical) matroid $M(G)$ correspond to classical deletions and contractions in $G$. A minor $N$ of $M(G)$ correspond to the graphical matroid $M(H)$ of a minor $H$ of $G$. A graphical matroid is 2-connected if and only if its corresponding graph is 2-connected. For any connected induced subgraph $G(U)$ of $G$, $r(E(U))=|U|-1$.
\newline In this paper, we give a minimal description of $P(G)$ by means of graph theory. We suppose that considered matroids and graphs are 2-connected because $P(M)$ (respectively, $P(G)$) is the cartesian product of the corresponding polytopes in the 2-connected components.
\newline Schrijver claimed \cite{Schrijver:2004} (page 862, discussion after Corollary 50.7d), and referring to a result of Gr\"otschel \cite{Grotschel:1977}, that the nontrivial facets of $P(G)$ are described by induced and 2-connected subgraphs as for the forests polytope, i.e., the convex hull of all forests of $G$ (a minimal description of the forests polytope was done in \cite{Pulleyblank:1989} as the set of all $x\in \mathbb{R}
^E$ satisfying: $x(e)\geq 0$ for any edge $e$, and $x(E(U))\leq |U|-1$ for any $U\subseteq E$ inducing a 2-connected subgraph with $|U|\geq 2$). In this paper, we show that some further assumptions are needed (see Theorem 2.3). We present a counterexample to Schrijver's claim at the end of section 2.
\newline The remainder of the paper is organized as follows: in section 2, we give a minimal description of $P(G)$, then we give alternative minimal decsriptions of $P(M)$ and $P(G)$ in section 3. Finally, we conclude in section 4.


\section{Facets of the spanning trees polytope}

A minimal description of $P(M)$ has been given independently in \cite{Fujishige:1984}, \cite{FeichtnerSturmfels:2005}, \cite{Chaourar:2018}, and \cite{HibiEtAl:2019} (see \cite{Kolbl:2020}) as follows.

\begin{theorem} A minimal description of $P(M)$ is the set of all $x\in \mathbb{R}^E$ satisfying the following constraints:
$$    x(P) \leq 1      \> for\>any \> essential\> parallel\> closure\> P\subseteq E               \eqno       (1)$$
$$    x(S) \geq |S|-1  \> for\> any\> essential\>  coparallel\> closure\> S\subseteq E            \eqno       (2)$$
$$    x(L) \leq r(L)   \> for\> any\> locked\> subset\> L\subseteq E                              \eqno       (3)$$
$$    x(E)=r(E)                                                                                   \eqno       (4)$$
\end{theorem}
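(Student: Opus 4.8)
The plan is to establish this minimal description of $P(M)$ by showing two things: first, that every base of $M$ satisfies all four constraints (so the base polytope is contained in the polyhedron $Q$ defined by (1)--(4)), and second, that each of the listed inequalities is either the affine hull equation or induces a genuine facet, with no redundancy. The first direction is routine: the equation (4) holds because every base has cardinality $r(E)$; inequality (1) holds because a base can contain at most one element of a parallel closure $P$ (since $r(P)=1$); inequality (2) is the dual statement, obtained by complementing a base (recall $E\backslash B$ is a base of $M^*$ and a coparallel closure is a parallel closure of $M^*$); and inequality (3) holds because $|B\cap L|\leq r(L)$ for every base $B$ and every subset $L$. So the real content lies entirely in the second direction, the minimality and completeness of the description.

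First I would invoke the standard matroid base-polytope theory: it is classical (Edmonds) that $P(M)=\{x\geq 0:\ x(E)=r(E),\ x(X)\leq r(X)\ \text{for all}\ X\subseteq E\}$. So the strategy is to start from this full rank-inequality description and prune it down to the claimed constraints. The key reduction step is to identify exactly which rank inequalities $x(X)\leq r(X)$ (together with the nonnegativity constraints $x(e)\geq 0$ and the cardinality-type lower bounds) actually define facets, and then to re-express those facet-defining inequalities in terms of parallel closures, coparallel closures, and locked subsets. The guiding principle is that a rank inequality $x(X)\leq r(X)$ is facet-inducing precisely when $X$ is closed, $M|X$ is 2-connected, and the complementary data on $E\backslash X$ behaves well; the locked-subset condition $\min\{r(L),r^*(E\backslash L)\}\geq 2$ is engineered to capture exactly the nontrivial such $X$. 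The boundary cases where $r(L)=1$ collapse to the parallel-closure inequalities (1), and dually where $r^*(E\backslash L)=1$ they collapse — after rewriting via $x(E)=r(E)$ — to the coparallel-closure inequalities (2).

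The main technical work, and the step I expect to be the chief obstacle, is proving that each locked inequality (3) is genuinely facet-defining and that no locked inequality is implied by the others together with (1), (2), (4). For the facet claim one must exhibit $\dim P(M)$ affinely independent bases each satisfying $x(L)=r(L)$ with equality; the natural route is to use the 2-connectivity of both $M|L$ and $M^*|(E\backslash L)$ to produce enough bases $B$ with $|B\cap L|=r(L)$ spanning the appropriate face, essentially reducing to the fact that the faces of $P(M)$ correspond to the base polytopes of the minors $M|L$ and $M/L$ and that 2-connectivity guarantees these minor polytopes are full-dimensional in their respective coordinate subspaces. For the non-redundancy and minimality, I would argue that the closedness of $L$ in $M$ and of $E\backslash L$ in $M^*$ (noted in the introduction) prevents one locked inequality from being dominated by another, and that the connectivity requirements rule out the split $r(E)=r(X)+r(E\backslash X)$ that would make an inequality decompose. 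Tying these together to match exactly the published independent descriptions in \cite{Fujishige:1984}, \cite{FeichtnerSturmfels:2005}, \cite{Chaourar:2018}, and \cite{HibiEtAl:2019} is where the bookkeeping is delicate, since the equivalence of the locked-subset formulation with the flag/nested-family formulations of those references must be reconciled.
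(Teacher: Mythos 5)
The first thing to note is that the paper does not prove this statement at all: Theorem 2.1 is quoted as a known result, established independently in \cite{Fujishige:1984}, \cite{FeichtnerSturmfels:2005}, \cite{Chaourar:2018}, and \cite{HibiEtAl:2019}, so there is no internal proof to compare your attempt against. Judged on its own merits, your proposal has the right architecture: validity of (1)--(4) for every base, then pruning Edmonds' description $P(M)=\{x\geq 0:\ x(E)=r(E),\ x(X)\leq r(X)\ \text{for all}\ X\subseteq E\}$ down to the facet-defining constraints. Your validity direction is correct and complete, and your identification of where the lower bounds (2) come from (rank inequalities on complements of coparallel closures, rewritten through $x(E)=r(E)$ using $r(E\backslash S)=r(E)-|S|+1$, which is exactly the paper's Lemma 3.1) is the right mechanism.

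However, the proposal stops exactly where the theorem begins. Everything that makes the statement a theorem --- (i) that $x(L)\leq r(L)$ defines a facet when $L$ is locked, (ii) that every rank inequality $x(X)\leq r(X)$ with $X$ neither locked nor an essential parallel closure is implied by the remaining constraints, and (iii) that the resulting system is irredundant --- is announced as ``the main technical work'' and ``the chief obstacle'' but never carried out. You do not exhibit the $\dim P(M)$ affinely independent bases tight at $x(L)=r(L)$, nor do you give the redundancy argument for non-locked closed sets; the engine of such an argument is a decomposition step (when $M|X$ or $M/X$ is disconnected, split into parts and add the corresponding inequalities), which is precisely what the paper does in the graphic case in its proof of Corollary 2.5, where $E(G)\backslash E(H)$ is split into two connected parts $L_1,L_2$ and the inequalities for $H\cup L_1$ and $H\cup L_2$ are summed. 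There is also an unaddressed subtlety in the minimality claim: $P(M)$ is not full-dimensional, so each facet admits infinitely many representing inequalities (this non-uniqueness is the entire point of the paper's Section 3), and a minimality proof must show that each listed constraint defines a facet and that distinct constraints define distinct facets modulo the equation $x(E)=r(E)$; your remark that closedness ``prevents one locked inequality from being dominated by another'' does not establish this. As it stands, the proposal is a sensible roadmap --- essentially the route taken in \cite{Chaourar:2018} --- but not a proof.
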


For the graphical case, a parallel closure is the edge set of an induced subgraph on two vertices, and a coparallel closure is a series closure, i.e., a maximal set of edges forming a simple path for which all involved vertices except its two terminals have degree 2. An essential parallel closure is the edge set of an induced subgraph on two vertices whose contraction keep the graph 2-connected. An essential coparallel closure is the edge set of a series closure whose deletion keep the graph 2-connected. It remains to translate lockdness in graphical terms.
\newline First, we prove the following lemma.

\begin{lemma}\label{ConnectedComplement} Let $H$ be a 2-connected subgraph of $G$, and $\{ L_1, L_2\}$ be a partition of $E(\overline{H})$ such that $(V(L_i), L_i)$ is connected, $i=1, 2$. Then $G(V(G)\backslash V(H))$ is connected if and only if $n_H+n<n_{H\cup L_1}+n_{H\cup L_2}$.
\end{lemma}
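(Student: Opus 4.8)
The plan is to split the claim into a purely arithmetic reduction of the inequality and a combinatorial equivalence about connectivity. First I would rewrite $n_{H\cup L_i}$ in vertex-set terms. Since $H\cup L_i=(V(H)\cup V(L_i),E(H)\cup L_i)$, we have $n_{H\cup L_i}=|V(H)\cup V(L_i)|=n_H+|V(L_i)\setminus V(H)|$. Writing $W=V(G)\setminus V(H)$ and $W_i=V(L_i)\setminus V(H)$, the inequality $n_H+n<n_{H\cup L_1}+n_{H\cup L_2}$ becomes $n-n_H<|W_1|+|W_2|$, that is, $|W|<|W_1|+|W_2|$.

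Next I would identify $W$ with $W_1\cup W_2$. Because $G$ is 2-connected and $H$ is induced, every vertex $v\in W$ has all of its incident edges in $E(\overline{H})=L_1\cup L_2$, so $v\in V(L_1)\cup V(L_2)$ and hence $W=W_1\cup W_2$. Inclusion–exclusion then gives $|W_1|+|W_2|=|W|+|W_1\cap W_2|$, so the inequality is equivalent to $W_1\cap W_2\neq\emptyset$; that is, to the existence of a vertex $z\notin V(H)$ incident both to an edge of $L_1$ and to an edge of $L_2$. This reduces the lemma to proving that $G(W)$ is connected if and only if $W_1\cap W_2\neq\emptyset$.

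For the easy direction I would argue the contrapositive (we may assume $L_1,L_2\neq\emptyset$, the degenerate case being immediate). If $W_1\cap W_2=\emptyset$, then $W=W_1\cup W_2$ with both parts nonempty, since each nonempty $L_i$ forces $W_i\neq\emptyset$: an edge of $L_i$ cannot lie inside the induced subgraph $H$, so it has an endpoint outside $V(H)$. Moreover every edge of $G$ with both endpoints in $W$ lies in $L_1$ (both ends then in $W_1$) or in $L_2$ (both ends in $W_2$), because such an edge is not in $E(H)$ and hence belongs to $E(\overline{H})$. Thus there is no edge of $G(W)$ joining $W_1$ to $W_2$, and $G(W)$ is disconnected.

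The reverse direction is where the real work lies. Assuming a shared vertex $z\in W_1\cap W_2$, I would try to connect an arbitrary $w\in W$ to $z$ inside $G(W)$. Since $(V(L_i),L_i)$ is connected, there is an $L_i$-path from $w$ to $z$; the difficulty is that such a path may pass through vertices of $V(H)$, which are absent from $G(W)$. The natural remedy is to use the 2-connectivity of $H$ to reroute each maximal subpath that enters $V(H)$ so that it re-emerges into $W$. I expect this rerouting to be the main obstacle: one must show that no vertex of $W$ is attached to the remainder of $W$ only through $V(H)$, and it is precisely here that the hypotheses on $H$ (its 2-connectivity, and the way $L_1,L_2$ meet $V(H)$) must be exploited with care, since a naive lifting of $L_i$-paths does not suffice — an $L_i$-path between two vertices of $W$ can be forced through $V(H)$.
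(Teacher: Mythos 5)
Your arithmetic reduction is correct and coincides with the paper's own: the paper's chain of equivalences also terminates at $|(V(L_1)\cap V(L_2))\backslash V(H)|\geq 1$, which is exactly your condition $W_1\cap W_2\neq\emptyset$, and your argument that $W_1\cap W_2=\emptyset$ forces $G(V(G)\setminus V(H))$ to be disconnected is sound (indeed it is more than the paper supplies, since the paper closes with the unsupported phrase ``which means that $G(V(G)\backslash V(H))$ is connected''). But your proposal stops there: the implication from $W_1\cap W_2\neq\emptyset$ to connectivity of $G(V(G)\setminus V(H))$ is never proved, only described as ``where the real work lies''. As written, you have established only one direction of the lemma, namely that connectivity of $G(V(G)\setminus V(H))$ implies the inequality.

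That gap cannot be filled, because the missing implication is false; your closing remark that an $L_i$-path between two vertices of $W$ can be forced through $V(H)$ is precisely what goes wrong. Take $G$ with vertices $a,b,c,z,w$ and edges $ab,bc,ca,za,zb,wa,wb$ (a 2-connected graph), let $H=G(\{a,b,c\})$ be the induced triangle, and take $L_1=\{za,wa\}$, $L_2=\{zb,wb\}$; each $(V(L_i),L_i)$ is a path, hence connected. Then $W_1\cap W_2=\{z,w\}\neq\emptyset$, equivalently $n_H+n=8<10=n_{H\cup L_1}+n_{H\cup L_2}$, yet $G(V(G)\setminus V(H))=G(\{z,w\})$ consists of two nonadjacent vertices and is disconnected. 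Note also that the other admissible partition $L_1'=\{za,zb\}$, $L_2'=\{wa,wb\}$ gives $8\geq 8$: the truth of the inequality depends on the chosen partition, so no criterion phrased for one fixed partition can be equivalent to connectivity of $G(V(G)\setminus V(H))$. In short, you have located a genuine defect in the lemma itself -- the paper's proof is an unjustified assertion at exactly the step you could not complete. (The intended application can be salvaged by a different route: if $E(H)$ is locked, then $M(G)/E(H)$ is a 2-connected matroid; contracting $E(H)$ identifies $V(H)$ to a single vertex, and deleting that vertex from the 2-connected contracted graph leaves $G(V(G)\setminus V(H))$ connected.)
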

\begin{proof} $$ n_H+n<n_{H\cup L_1}+n_{H\cup L_2}$$
$ \Longleftrightarrow n_H+n_H+|V(\overline{H})|-|V(H)\cap V(\overline{H})|<n_H+n_{L_1}-|V(H)\cap V(L_1)|+n_H+n_{L_2}-|V(H)\cap V(L_2)|$\\
$ \Longleftrightarrow 2n_H+|V(\overline{H})|-|V(H)\cap V(\overline{H})|<2n_H+n_{L_1}+n_{L_2}-|V(H)\cap V(L_1)|-|V(H)\cap V(L_2)|$\\
$ \Longleftrightarrow 2n_H+|V(\overline{H})|-|V(H)\cap V(\overline{H})|<2n_H+n_{L_1}+n_{L_2}-|V(H)\cap V(\overline{H})|-|V(H)\cap V(L_1)\cap V(L_2)|$\\
$ \Longleftrightarrow |V(\overline{H})|<n_{L_1}+n_{L_2}-|V(H)\cap V(L_1)\cap V(L_2)|$\\
$ \Longleftrightarrow n_{L_1}+n_{L_2}-|V(L_1)\cap V(L_2)|<n_{L_1}+n_{L_2}-|V(H)\cap V(L_1)\cap V(L_2)|$\\
$ \Longleftrightarrow |V(L_1)\cap V(L_2)|>|V(H)\cap V(L_1)\cap V(L_2)|$\\
$ \Longleftrightarrow |V(L_1)\cap V(L_2)|\geq |V(H)\cap V(L_1)\cap V(L_2)|+1$\\
$ \Longleftrightarrow |(V(L_1)\cap V(L_2))\backslash V(H)|\geq 1$\\
which means that $G(V(G)\backslash V(H))$ is connected.
\end{proof}

\noindent Now we can characterize locked subgraphs by means of graphs terminology.

\begin{theorem}\label{LockedCharacterization} $H$ is a locked subgraph of $G$ if and only if $H$ is an induced and 2-connected subgraph such that $3\leq n_H\leq n-1$, $m_{\overline{H}}\geq n_{\overline{H}}$ or $|V(H)\cap V(\overline{H})|\geq 3$, and $G(V(G)\backslash V(H))$ is a connected subgraph.
\end{theorem}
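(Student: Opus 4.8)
The plan is to unwind the matroid definition of ``locked'' for $L = E(H)$ and translate each ingredient into the language of $G$, using throughout that for a connected graph $\Gamma$ the cycle matroid $M(\Gamma)$ has rank $|V(\Gamma)| - 1$, that $M(\Gamma)$ is $2$-connected precisely when $\Gamma$ is $2$-connected, and that a connected edge set is closed in $M(G)$ exactly when it induces a subgraph. Recall $L$ is locked iff $M|L$ and $M/L$ are $2$-connected and $r(L) \ge \max\{2,\, 2 + r(E) - |E \setminus L|\}$; since $r(E) = n-1$ and $r(E(H)) = n_H - 1$ for connected $H$, the two lower bounds read $n_H \ge 3$ and $m_{\overline H} = |E \setminus L| \ge n - n_H + 2$. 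I abbreviate these as (i) $M|L$ $2$-connected, (ii) $M/L$ $2$-connected, (iii) $n_H \ge 3$, (iv) $m_{\overline H} \ge n - n_H + 2$.

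First I would dispatch the easy correspondences. Condition (i) says $(V(H), E(H))$ is $2$-connected; combined with the preliminary fact that a locked set is closed, closedness of $E(H)$ forces every edge of $G$ with both ends in $V(H)$ into $E(H)$, i.e.\ $H$ is induced, giving the ``induced and $2$-connected'' clause. Condition (iii) is literally $n_H \ge 3$, and since $H$ is induced, $V(H)=V(G)$ would give $E(H)=E(G)$, contradicting properness of a locked set, so $n_H \le n-1$; this yields $3 \le n_H \le n-1$. In the converse direction each graph hypothesis returns the matching matroid statement, with properness of $L$ coming from the existence of a vertex outside $V(H)$.

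The core of the argument is to show, under the standing hypothesis that $G$ is $2$-connected, that (ii) and (iv) are jointly equivalent to connectivity of $G(V(G)\setminus V(H))$ together with the disjunction of condition~$3$. For (ii), note $M/L = M(G/E(H))$, where contracting the induced connected subgraph $H$ fuses $V(H)$ into a single vertex $v_H$; I would use Lemma~\ref{ConnectedComplement} to pass between $2$-connectivity of this contraction and connectivity of $G(V(G)\setminus V(H))$. A $1$-separation of $M(G/E(H))$ is a partition $E(\overline H) = L_1 \sqcup L_2$ into connected pieces sharing at most one vertex of $G/E(H)$; splitting $\overline H$ along a component of $G(V(G)\setminus V(H))$ produces exactly such a separation when the complement is disconnected, and Lemma~\ref{ConnectedComplement} certifies, via the resulting equality $n_H + n = n_{H\cup L_1} + n_{H\cup L_2}$, that the two pieces share no vertex outside $V(H)$. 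For the reverse implication one invokes $2$-connectivity of $G$: any cut vertex of $G/E(H)$ other than $v_H$ would be a cut vertex of $G$, so the only possible separating vertex is $v_H$, whose removal leaves precisely $G(V(G)\setminus V(H))$.

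Finally I would translate (iv) into condition~$3$. Writing $t = |V(H)\cap V(\overline H)|$ and $n' = n - n_H$, one has $n_{\overline H} = t + n'$, and $2$-connectivity of $G$ forces $t \ge 2$ (a single attachment vertex would be a cut vertex of $G$) together with at least $t$ crossing edges. Granting that the complement is connected (so $m' \ge n'-1$), a short count gives: if $t = 2$ then (iv) is identical to $m_{\overline H} \ge n_{\overline H}$, while if $t \ge 3$ both (iv) and the disjunction hold automatically; hence (iv) is equivalent to condition~$3$ in the presence of connectivity of $G(V(G)\setminus V(H))$. The main obstacle is the step (ii)$\Leftrightarrow$connectivity of the complement: one must match matroid $1$-separations of the contraction with genuine partitions into pieces that are \emph{connected in $G$} (not merely in $G/E(H)$), and it is precisely here that Lemma~\ref{ConnectedComplement} and the $2$-connectivity of $G$ must be combined with care.
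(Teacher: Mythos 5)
Your proposal is correct in substance and follows the same overall skeleton as the paper (unwind the definition of a locked subset and translate each clause), but it takes a genuinely different route on the central step. The paper never looks at the contracted graph: it expresses $2$-connectivity of $M^*|(E\setminus E(H))$ as the strict inequality $r^*(E\setminus E(H))<r^*(L_1)+r^*(L_2)$ for a partition into connected pieces, rewrites the dual ranks as vertex counts, and then funnels everything through the counting identity of Lemma~\ref{ConnectedComplement}. You instead identify $M/E(H)$ with the cycle matroid of $G/E(H)$, observe that contracting the induced connected subgraph $H$ fuses $V(H)$ into one vertex $v_H$, and argue by cut vertices: since $G$ is $2$-connected, no vertex other than $v_H$ can be a cut vertex of $G/E(H)$, and deleting $v_H$ leaves exactly $G(V(G)\setminus V(H))$, so $M/E(H)$ is $2$-connected if and only if the complement is connected. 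This is more elementary and makes the role of $G$'s $2$-connectivity transparent; it also makes your appeal to Lemma~\ref{ConnectedComplement} in the forward direction unnecessary (and in fact slightly misapplied: when $G(V(G)\setminus V(H))$ has three or more components, the piece $L_2$ obtained by splitting off one component need not induce a connected subgraph of $G$, which the lemma requires --- but your cut-vertex argument alone already gives both directions, so this is harmless). The paper's rank-arithmetic route, by contrast, stays entirely inside matroid formalism, avoids the minor identification $M/E(H)=M(G/E(H))$ and block theory, and reuses Lemma~\ref{ConnectedComplement}, which it needs again for Corollary~2.5. Your translation of the rank condition $r^*(E\setminus E(H))\geq 2$ is essentially the paper's computation, just organized by $t=|V(H)\cap V(\overline{H})|$ (cases $t=2$ and $t\geq 3$) rather than by comparing $m_{\overline{H}}$ with $n_{\overline{H}}$; both counts agree, and yours avoids the paper's series-closure digression in its Case~2.
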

\begin{proof}
Without loss of generality, we can suppose that $G$ is 2-connected.
\newline It is not difficult to see that $E(H)$ is closed and 2-connected in $M(G)$ if and only if $H$ is an induced and 2-connected subgraph of $G$.
\newline Now, suppose that $E(H)$ is closed and 2-connected in $M(G)$, and $E(G)\backslash E(H)$ is 2-connected in the dual matroid $M^*(G)$ (i.e., $E(H)$ is locked in $M(G)$). Let $\{ L_1, L_2\}$ be a partition of $E(G)\backslash E(H)$ such that the subgraph $(V(L_i), L_i)$ is connected, $i=1, 2$. It follows that $r^*(E(G)\backslash E(H))<r^*(L_1)+r^*(L_2)$, i.e., $|E(G)\backslash E(H)|-r(E(G))+r(E(H))<|L_1|+|L_2|-2r(E(G))+r(E(H)\cup L_1)|+r(E(H)\cup L_2)|$. In other words, $r(E(H))+r(E(G))<r(E(H)\cup L_1)+r(E(H)\cup L_2)$, which is equivalent to: $n_H-1+n-1<n_{H\cup L_1}-1+n_{H\cup L_2}-1$, i.e., $G(V(G)\backslash V(H))$ is connected according to the above lemma.
\newline Let check the condition: $min\{r(E(H)), r^*(E(\overline{H}))\} \geq 2$. Since $r(E(H))=n_H-1$, we have $r(E(H))\geq 2$ if and only if $n_H\geq 3$. Moreover, $r^*(E(G)\backslash E(H))=m_{\overline{H}}+r(E(H))-r(E(G))=m_{\overline{H}}+n_H-n$ then we have $r^*(E(G)\backslash E(H))\geq 2$ if and only if $n_H\geq 2+n-m_{\overline{H}}$, i.e., $|V(H)\cap V(\overline{H})|\geq 2+n_{\overline{H}}-m_{\overline{H}}$ (inequality (*)). But, if $G$ is 2-connected and $G(V(G)\backslash V(H))$ is connected, then $|V(H)\cap V(\overline{H})|\geq 2$, and $m_{\overline{H}}-|V(H)\cap V(\overline{H})|\geq |V(\overline{H})\backslash V(H)|-1\geq n_{\overline{H}}-|V(H)\cap V(\overline{H})|-1$, i.e., $m_{\overline{H}}\geq n_{\overline{H}}-1$. Thus, we have either $m_{\overline{H}}\geq n_{\overline{H}}$  or $m_{\overline{H}}=n_{\overline{H}}-1$.
\\ {\bf Case 1:} If $m_{\overline{H}}\geq n_{\overline{H}}$ then $2+n_{\overline{H}}-m_{\overline{H}}\leq 2$.
\\ {\bf Case 2:} If $m_{\overline{H}}=n_{\overline{H}}-1$ then $\overline{H}$ is a tree and $2+n_{\overline{H}}-m_{\overline{H}}=3$. If $|V(H)\cap V(\overline{H})|=2$ then $E\backslash E(H)=E(\overline{H})$ is a series closure. Hence $E\backslash E(H)$ is a parallel closure in the dual of $M(G)$ and $r^*(E\backslash E(H))=1$. It follows that $H$ is not locked.
\\Therefore, in both cases, the inequality (*) is equivalent to $m_{\overline{H}}\geq n_{\overline{H}}$ or $|V(H)\cap V(\overline{H})|\geq 3$.
\newline Furthermore, $E(H)$ is closed in $M(G)$ and distinct from $E$, i.e., $r(E(H))\leq r(E(G))-1$, which is equivalent to: $n_H\leq n-1$.
\end{proof}

So the consequence for the spanning tree polytope is:

\begin{corollary} A minimal description of $P(G)$ is the set of all $x\in \mathbb{R}^{E(G)}$ satisfying the following constraints:
$$    x(P) \leq 1          \> for\>any \> essential\> parallel\> closure\> P \>of \> G   \eqno        (5)$$
$$    x(S) \geq |S|-1      \> for\> any\> essential\> coparallel\> closure\> S \>of \> G \eqno        (6)$$
$$    x(E(H)) \leq n_H-1   \>for\> any\> locked\> subgraph\> H \>of \> G                 \eqno        (7)$$
$$    x(E(G))=n-1                                                                        \eqno        (8)$$
\end{corollary}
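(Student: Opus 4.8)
The plan is to derive Corollary~2.4 directly from Theorem~2.1 by translating each of the four matroid constraints into its graphical counterpart, using the graphical dictionary already assembled in the paragraph after Theorem~2.1 together with the locked-subgraph characterization in Theorem~\ref{LockedCharacterization}. Since $P(G)=P(M(G))$ by definition, and since we may assume $G$ is 2-connected (as stated in the introduction, because $P(G)$ factors as a cartesian product over 2-connected components), Theorem~2.1 already gives a minimal description of $P(G)$ in matroid language; the entire task is to rewrite constraints $(1)$--$(4)$ as $(5)$--$(8)$ without gaining or losing any inequality.

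First I would handle the two easy correspondences. For the equality constraint, $x(E)=r(E)$ becomes $x(E(G))=n-1$ immediately, since $r(E(G))=n-1$ for a connected spanning subgraph on $n$ vertices; this converts $(4)$ into $(8)$. For the parallel-closure constraints, I would invoke the stated fact that in $M(G)$ an essential parallel closure is exactly the edge set of an induced two-vertex subgraph whose contraction keeps $G$ 2-connected, so $(1)$ reads $x(P)\le 1$ over all essential parallel closures $P$ of $G$, which is precisely $(5)$. Dually, an essential coparallel closure is a series closure whose deletion keeps $G$ 2-connected, and with $|S|$ the number of edges in that series closure, $(2)$ becomes $x(S)\ge |S|-1$, which is $(6)$.

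The remaining work is the locked-subset constraint $(3)$. Here I would argue that the locked subsets $L\subseteq E(G)$ are exactly the edge sets $E(H)$ of locked subgraphs $H$ of $G$, which is the \emph{definition} adopted in the introduction (``a locked subgraph $H$ of $G$ is a subgraph for which $E(H)$ is a locked subset of $M(G)$''), so the index set of $(3)$ and $(7)$ coincide. It then remains only to compute the right-hand side: for a locked subgraph $H$, which by Theorem~\ref{LockedCharacterization} is induced and 2-connected, $H$ is connected and spans $n_H$ vertices, so $r(E(H))=n_H-1$, turning $x(L)\le r(L)$ into $x(E(H))\le n_H-1$, namely $(7)$. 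Theorem~\ref{LockedCharacterization} is what makes $(7)$ a genuinely \emph{graphical} description, as it recasts ``$E(H)$ locked in $M(G)$'' into the explicit conditions on $H$ (induced, 2-connected, $3\le n_H\le n-1$, the edge/vertex count condition on $\overline{H}$, and connectedness of $G(V(G)\setminus V(H))$).

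I expect the main obstacle to be purely expository rather than mathematical: one must verify that the translation is a \emph{bijection} on constraints, so that minimality is preserved. Since Theorem~2.1 asserts that $(1)$--$(4)$ is a \emph{minimal} description, and each of $(1)$--$(3)$ maps to exactly one of $(5)$--$(7)$ under the graphical dictionary (no constraint is merged, dropped, or duplicated) while the equality $(4)\leftrightarrow(8)$ is likewise a one-to-one correspondence, the resulting system $(5)$--$(8)$ inherits minimality automatically. The one subtlety worth stating carefully is that the locked-subgraph notion used in $(7)$ is precisely the one characterized in Theorem~\ref{LockedCharacterization}, so that the family of inequalities $(7)$ is neither larger nor smaller than the family $(3)$; once that identification is recorded, the corollary follows.
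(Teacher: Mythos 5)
Your proposal is correct and follows exactly the paper's (implicit) argument: the paper states this corollary without a separate proof, treating it as the direct consequence of Theorem~2.1 under the graphical dictionary (essential parallel/coparallel closures, $r(E(G))=n-1$, $r(E(H))=n_H-1$) together with the identification of locked subsets with edge sets of locked subgraphs made graphical by Theorem~\ref{LockedCharacterization}. Your additional remark that minimality is inherited because the translation is a constraint-by-constraint bijection is precisely the point the paper leaves tacit.
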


Note that many equivalent minimal descriptions can be given because $P(G)$ (as $P(M)$) is not full-dimensional ($dim(P(G))=|E(G)|-1$ if $G$ is 2-connected). We discuss this issue in section 3.
\\A direct consequence of the above corollary is the following corollary correcting the well-know idea about nontrivial facets of the spanning trees polytope.

\begin{corollary} The following constraint of $P(G)$ is redundant if $H$ is not locked:
$$    x(E(H)) \leq n_H-1    \> for \> any \> induced \> and \> 2-connected \> subgraph \> H \> of \> G           \eqno        (9)$$
\end{corollary}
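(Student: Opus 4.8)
The plan is to obtain this directly from the minimal description of $P(G)$ just established (the constraints (5)--(8)). The one ingredient not already proved is that the inequality (9) is \emph{valid} for $P(G)$ for \emph{every} induced and 2-connected subgraph $H$, locked or not. To see this, take any spanning tree $T$ of $G$: the set $T\cap E(H)$ is acyclic, and since each edge of $H$ joins two vertices of $V(H)$, it is a forest on the $n_H$ vertices of $H$ and hence has at most $n_H-1$ edges. Thus $\chi_T(E(H))=|T\cap E(H)|\leq n_H-1$ for every spanning tree, and as the vectors $\chi_T$ are exactly the vertices of $P(G)$, the inequality $x(E(H))\leq n_H-1$ holds on all of $P(G)$.

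Next I would invoke completeness: by the minimal description above, the system (5)--(8) describes $P(G)$ exactly, so every inequality valid on $P(G)$---in particular (9)---is implied by (5)--(8). It remains to identify when (9) is implied by constraints \emph{other} than itself. Among (5)--(8), the only inequalities of the form ``$x(E(H'))\leq n_{H'}-1$ with $H'$ induced and 2-connected'' are those of type (7), and by Theorem \ref{LockedCharacterization} these are indexed exactly by the locked subgraphs of $G$. Hence, if $H$ is induced and 2-connected but not locked, the inequality (9) is not one of the constraints (7); being valid, it is then implied by the remaining constraints (5)--(8), i.e.\ redundant, which is the assertion.

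The point requiring care is the reading of ``redundant'' in the degenerate regimes of Theorem \ref{LockedCharacterization}, and this is where I expect the only real subtlety to lie. If $n_H=n$ then (9) is just the equality (8); if $G(V(G)\backslash V(H))$ is disconnected then, by Lemma \ref{ConnectedComplement}, $E(G)\backslash E(H)$ splits in $M^*(G)$ and the bound for $H$ decomposes along this split; and if $E(H)$ is a parallel closure ($n_H=2$) or $\overline{H}$ is a tree meeting $H$ in exactly two vertices (so $E(G)\backslash E(H)$ is a series closure), then (9) coincides, up to the equality (8), with a parallel- or coparallel-closure inequality of type (5) or (6). In this last regime the underlying facet, when the closure is essential, is already present in the description but is indexed by that closure rather than by the subgraph $H$; so the precise statement is that, \emph{among the induced-2-connected-subgraph inequalities}, exactly the locked ones are facets while every non-locked one is implied by (5)--(8) (equivalently, one may restrict attention to $n_H\geq 3$). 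With this reading the deduction from completeness in the previous paragraph applies without exception, and the corollary is the exact correction of Schrijver's claim: only locked subgraphs contribute genuinely new facets of $P(G)$.
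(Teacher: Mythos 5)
Your proof is correct, but it takes a genuinely different route from the paper's. The paper argues by explicit derivation: from Theorem \ref{LockedCharacterization} and Lemma \ref{ConnectedComplement}, non-lockedness is used to produce a partition $\{L_1,L_2\}$ of $E(\overline{H})$ into connected parts with $n_H+n\geq n_{H\cup L_1}+n_{H\cup L_2}$, and then the identity $x(E(H))+x(E(G))=x(E(H)\cup L_1)+x(E(H)\cup L_2)$, the valid bounds $x(E(H)\cup L_i)\leq n_{H\cup L_i}-1$, and the equality $x(E(G))=n-1$ together force $x(E(H))\leq n_H-1$. You instead prove validity of (9) for every subgraph by the elementary forest bound and then invoke completeness of the system (5)--(8): any inequality valid on $P(G)$ is implied by a complete linear description, so adding (9) changes nothing. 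Both arguments are sound, and each buys something. Your soft argument is uniform over all the ways lockedness can fail, whereas the paper's derivation, as written, really only covers the failure mode in which $G(V(G)\backslash V(H))$ is disconnected --- it says nothing explicit about $n_H=2$, $n_H=n$ (where no suitable partition even exists), or the case where $E(\overline{H})$ is a series closure meeting $H$ in two vertices; your discussion of the degenerate regimes addresses exactly these, so in that respect your proof is more complete than the paper's. What the paper's constructive route buys is that it exhibits the specific constraints dominating (9), which is the real substance behind the claim that non-locked subgraphs contribute no new facets; your completeness argument by itself only establishes ``valid, hence implied by (5)--(8),'' a property shared by every valid inequality including the facet-defining ones, so its force rests entirely on the supplementary observation (which you do make explicit) that for non-locked $H$ the inequality is not among the retained constraints (7), and in the degenerate cases coincides with a constraint of type (5), (6) or (8) rather than giving a new facet.
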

\begin{proof}
Let $H$ be an induced and 2-connected subgraph of $G$ which is not locked. According to Theorem \ref{LockedCharacterization} and Lemma \ref{ConnectedComplement}, and given a partition $\{ L_1, L_2\}$ of $E(G)\backslash E(H)$ such that $(V(L_i), L_i)$ is connected, $i=1, 2$, we have $n_H+n\geq n_{H\cup L_1}+n_{H\cup L_2}$.
\newline In this case, for $x\in P(G)$, we have: $x(E(H))+x(E(G))=x(E(H)\cup L_1))+x(E(H)\cup L_2))\leq n_{H\cup L_1}+n_{H\cup L_2}-2\leq n_H+n-2$. But $x(E(G))=n-1$, hence $x(E(H))\leq n_H-1$.
\end{proof}

Next we give here an example of an induced and 2-connected subgraph which is not locked.
\begin{center}
\begin{tikzpicture}
  [scale=.8,auto=center]

\filldraw [black] (2,0) circle (4pt);

\filldraw [black] (4,0) circle (4pt);

\filldraw [black] (0,2) circle (4pt);

\filldraw [black] (2,2) circle (4pt);

\filldraw [black] (4,2) circle (4pt);

\filldraw [black] (6,2) circle (4pt);

\draw[black, thick] (2,0) -- (4,0);
\draw[black, thick] (2,0) -- (0,2);
\draw[black, thick] (2,0) -- (2,2);
\draw[black, thick] (4,0) -- (4,2);
\draw[black, thick] (4,0) -- (6,2);
\draw[black, thick] (0,2) -- (2,2);
\draw[black, thick] (2,2) -- (4,2);
\draw[black, thick] (4,2) -- (6,2);

\node at (0,2.4) {\text{a}};
\node at (2,2.4) {\text{b}};
\node at (4,2.4) {\text{c}};
\node at (6,2.4) {\text{d}};
\node at (4,-0.4) {\text{e}};
\node at (2,-0.4) {\text{f}};

\end{tikzpicture}
\end{center}

Let $H=G(\{ b, c, e, f\})$, i.e., the circuit $bcefb$, $L_1=\{ ab, af\}$, and $L_2=\{ dc, de\}$. $H$ induces a 2-connected subgraph which is not locked because $n_H+n=4+6=10\geq 10=5+5=n_{H\cup L_1}+n_{H\cup L_2}$.
\newline Note that this idea happened because it was thought that facets of the forests polytope are kept for one of its faces which is $P(G)$.

\section{Alternative equivalent minimal descriptions of $P(M)$ and $P(G)$}

In this section, we give alternative equivalent
minimal descriptions of $P(M)$ and $P(G)$.
\\ First we introduce some notations. We denote by $\mathcal P(M)$, $\mathcal S(M)$, and $\mathcal L(M)$, the class of, respectively, essential parallel closures, essential coparallel closures, and locked susbets of $M$.
\\ Now we consider the following constraints:
$$    x(E\backslash P) \geq r(E)-1            \> for\>any \> essential\> parallel\> closure\> P\subseteq E        \eqno       (9) $$
$$    x(E\backslash S) \leq r(E\backslash S)  \> for\> any\> essential\>  coparallel\> closure\> S\subseteq E     \eqno       (10)$$
$$    x(E\backslash L) \geq r(E)-r(L)         \> for\> any\> locked\> subset\> L\subseteq E                       \eqno       (11)$$

We need the following lemma.

\begin{lemma}
  If $S\in \mathcal S(M)$ then $r(E\backslash S)=r(E)-|S|+1$
\end{lemma}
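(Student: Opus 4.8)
The plan is to reduce the claim to a single application of the dual rank formula $r^*(X)=|X|-r(E)+r(E\backslash X)$ recorded in the introduction. First I would unwind the definitions. By hypothesis $S\in\mathcal S(M)$ is an \emph{essential} coparallel closure, hence in particular a coparallel closure, which by definition means that $S$ is a parallel closure of the dual matroid $M^*$. Since a parallel closure is a (maximal) subset of rank exactly $1$, applying this to $M^*$ gives $r^*(S)=1$. It is worth noting that the essentiality of $S$ — the property that $M\backslash S$ remains 2-connected — is not needed for this lemma; only the closure condition $r^*(S)=1$ is used.

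Next I would substitute $X=S$ into the dual rank formula, obtaining $r^*(S)=|S|-r(E)+r(E\backslash S)$. Combining this identity with $r^*(S)=1$ yields $1=|S|-r(E)+r(E\backslash S)$, and solving for $r(E\backslash S)$ gives $r(E\backslash S)=r(E)-|S|+1$, which is exactly the asserted equality.

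There is essentially no obstacle to overcome: the argument is a one-line computation once the definitions have been translated correctly. The only point requiring a little care is to keep track of which rank function is relevant — a coparallel closure is defined as a parallel closure \emph{in the dual}, so the governing quantity is the dual rank $r^*$ rather than $r$ — and to use that a parallel closure has rank precisely $1$, not merely at most $1$. With these two observations in place, the dual rank formula finishes the proof immediately.
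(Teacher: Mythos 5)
Your proof is correct and follows essentially the same route as the paper's: both rest on the observation that $r^*(S)=1$ for a coparallel closure together with the identity $r^*(X)=|X|-r(E)+r(E\backslash X)$. The only cosmetic difference is that you substitute $X=S$ directly, while the paper applies the identity in the dual form $r(E\backslash S)=|E\backslash S|-r^*(E)+r^*(S)$ and then uses $r(E)=|E|-r^*(E)$; your remark that essentiality of $S$ is never used is also accurate and consistent with the paper's argument.
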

\begin{proof}
  Since $r^*(S)=1$ and $r(E)=|E|-r^*(E)$, we have: $r(E\backslash S)=|E\backslash S|-r^*(E)+r^*(S)=|E|-|S|-r^*(E)+1=r(E)-|S|+1$, and we are done.
\end{proof}

\begin{theorem} Let $\mathcal P\subseteq \mathcal P(M)$, $\mathcal S\subseteq \mathcal S(M)$, and $\mathcal L\subseteq \mathcal L(M)$. A minimal description of $P(M)$ is the set of all $x\in \mathbb{R}^E$ satisfying the constraint (5), and the following constraints:
  \\ Constraint (1) for any $P\in \mathcal P$, and constraint (9) for any $P\in \mathcal P(M)\backslash \mathcal P$,
  \\ Constraint (2) for any $S\in \mathcal S$, and constraint (10) for any $S\in \mathcal S(M)\backslash \mathcal S$,
  \\ Constraint (3) for any $L\in \mathcal L$, and constraint (11) for any $L\in \mathcal L(M)\backslash \mathcal L$.
\end{theorem}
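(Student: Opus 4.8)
The plan is to show that, on the affine hyperplane cut out by the equality $x(E)=r(E)$, each alternative inequality $(9)$, $(10)$, $(11)$ defines exactly the same halfspace as its standard counterpart $(1)$, $(2)$, $(3)$, respectively. Granting this, the proposed mixed system and the minimal system of Theorem 2.1 have the same solution set, and minimality transfers along the induced bijection between constraints.

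First I would record that $P(M)$ lies in the hyperplane $\mathcal H=\{x\in\mathbb R^E : x(E)=r(E)\}$, so on $\mathcal H$ one has $x(E\backslash A)=r(E)-x(A)$ for every $A\subseteq E$. Substituting this identity into $(9)$ turns $x(E\backslash P)\geq r(E)-1$ into $x(P)\leq 1$, which is $(1)$; substituting into $(11)$ turns $x(E\backslash L)\geq r(E)-r(L)$ into $x(L)\leq r(L)$, which is $(3)$. For the coparallel case I would first invoke the preceding Lemma to rewrite the right-hand side as $r(E\backslash S)=r(E)-|S|+1$; then on $\mathcal H$ the inequality $(10)$ becomes $r(E)-x(S)\leq r(E)-|S|+1$, i.e. $x(S)\geq |S|-1$, which is $(2)$. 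Each of these equivalences uses $x(E)=r(E)$ in an essential way, so the alternative forms agree with their counterparts only on $\mathcal H$, not throughout $\mathbb R^E$.

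Next I would assemble the equivalences. For any selection $\mathcal P\subseteq\mathcal P(M)$, $\mathcal S\subseteq\mathcal S(M)$, $\mathcal L\subseteq\mathcal L(M)$, the proposed description retains the standard constraint on the chosen closures and subsets and uses the equivalent alternative form on the remaining ones. Since on $\mathcal H$ the two forms define identical halfspaces, the proposed system and the minimal system $(1)$--$(4)$ of Theorem 2.1 cut out the same polytope $P(M)$.

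Finally, for minimality I would observe that replacing an inequality by one obtained from it by adding a suitable multiple of the equality $x(E)=r(E)$ changes neither the polytope nor the facet the inequality supports; this is exactly the relationship between each pair above, since $x(E\backslash A)-x(A)$ differs from $-2x(A)+x(E)$ only through the equality. Theorem 2.1 asserts that $(1)$, $(2)$, $(3)$ are irredundant and facet-defining, and since each constraint of the proposed system supports, within $\mathcal H$, the same facet as exactly one constraint of Theorem 2.1, no constraint of the proposed system is implied by the others. Hence the proposed description is again minimal. I expect no genuine obstacle here: the only care required is to perform every substitution on $\mathcal H$ and to apply the Lemma correctly in the coparallel case.
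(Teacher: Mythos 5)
Your proposal is correct and takes essentially the same route as the paper: both arguments show that each pair of constraints $((1),(9))$, $((2),(10))$, $((3),(11))$ defines the same halfspace once the equality $x(E)=r(E)$ is imposed, invoking Lemma 3.1 to handle the coparallel pair. Your write-up merely makes explicit the substitution $x(E\backslash A)=r(E)-x(A)$ and the transfer of minimality (each alternative inequality differs from its counterpart by a multiple of the equality, hence supports the same facet), which the paper's terse proof leaves implicit.
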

\begin{proof}
  It suffices to see that the corresponding pairs of constraints ((1) and (9), (2) and (10), (3) and (11)) are equivalent if constraint (5) is satisfied ($r(E)=x(E)=x(F)+x(E\backslash F)$ for any, respectively, essential parallel closure, essential coparallel closure, or locked subset $F$), and by using the above lemma for the pair (2) and (10).
\end{proof}

And similarly, we have alternative equivalent descriptions of $P(G)$ by translating matroid theory terms to graphs terminology as it was characterized in section 2.

\section{Conclusion}

We have described all facets of $P(G)$ correcting a well-known idea about nontrivial ones of them.

\vspace{11pt}
{\Large\bf Acknowledgements}

The author is grateful to Smail Djebali for his valuable remarks in a previous version of this paper.


\end{document}